\documentclass[11pt]{article}%
\usepackage{amsfonts}
\usepackage{amsmath}
\usepackage{amssymb}
\usepackage{graphicx}%
\setcounter{MaxMatrixCols}{30}
\newtheorem{theorem}{Theorem}
\newtheorem{acknowledgement}[theorem]{Acknowledgement}

\newtheorem{corollary}[theorem]{Corollary}

\newtheorem{proposition}[theorem]{Proposition}

\newenvironment{proof}[1][Proof]{\noindent\textbf{#1.} }{\ \rule{0.5em}{0.5em}}
\begin{document}

\title{The Pseudo-Character of the Weil Representation and its Relation with the
Conley--Zehnder Index}
\author{Maurice de Gosson\thanks{maurice.degosson@gmail.com}
\and Franz Luef\thanks{franz.luef@univie.ac.at}}
\maketitle

\begin{abstract}
We calculate the character of the Weil representation using previous results
which express the Weyl symbol of metaplectic operators in terms of the
symplectic Cayley transform and the Conley--Zehnder index.

\end{abstract}

\section{Introduction}

Let $\operatorname*{Sp}(2n,\mathbb{R})$ be the standard symplectic group: it
consists of all linear automorphisms of $\mathbb{R}^{2n}=T^{\ast}%
\mathbb{R}^{n}$ preserving the standard symplectic form $\sigma=\sum_{j=1}%
^{n}dp_{j}\wedge dx_{j}$ (the generic element of $\mathbb{R}^{2n}$ is
$z=(x,p)$). It is well-known that $\operatorname*{Sp}(2n,\mathbb{R})$ is a
connected Lie group and $\pi_{1}[\operatorname*{Sp}(2n,\mathbb{R})]$ is
isomorphic to the integer group $(\mathbb{Z},+)$ hence $\operatorname*{Sp}%
(2n,\mathbb{R})$ has covering groups $\operatorname*{Sp}_{q}(2n,\mathbb{R})$
of all orders $q=2,3,...,\infty$. It turns out that the double cover
$\operatorname*{Sp}_{2}(2n,\mathbb{R})$ can be faithfully represented by a
group of unitary operators on $L^{2}(\mathbb{R}^{n})$. This group is denoted
by $\operatorname*{Mp}(2n,\mathbb{R})$ and is called the Weil (or metaplectic)
representation of $\operatorname*{Sp}(2n,\mathbb{R})$. Its elements are called
metaplectic operators. The covering projection is denoted by $\Pi
:\operatorname*{Mp}(2n,\mathbb{R})\longrightarrow\operatorname*{Sp}%
(2n,\mathbb{R})$.

Trace formulas for diverse Weil representations have been recently obtained
(see for instance \cite{gurhad,tho}, also see \cite{luma}); such formulas are
important in many contexts, for instance in the theory of theta functions. In
this Note we study the analogue of trace formulas for the continuous case,
that is, for the full metaplectic representation. Of course, for metaplectic
operators the notion of trace does not make sense since such operators are not
of trace class. It is however possible to define what we call a
\textquotedblleft pseudo character\textquotedblright\ by the formula
\[
\operatorname*{Tr}(S)=\int_{\mathbb{R}^{n}}K_{S}(x,x)dx
\]
provided that $s=\Pi(S)$ has no eigenvalue equal to one; here $K_{S}$ is the
kernel of $S\in\operatorname*{Mp}(2n,\mathbb{R})$. We will see that the phase
of the \textquotedblleft pseudo-trace\textquotedblright\ in the right-hand
side is obtained in terms of the Conley--Zehnder index of symplectic paths,
familiar from the theory of periodic orbits of Hamiltonian systems (see
\cite{CZ,GGP,HWZ,Long} and the references in these works). This index has been
expressed in terms of the Leray--Maslov index (see \cite{Leray,JMPA1}) in de
Gosson \cite{JMP,Birk,RMP,JMPA2} and in de Gosson et al \cite{GGP}. The
Conley--Zehnder index also plays a key role in the semiclassical quantization
of chaotic Hamiltonian systems (the physicist's \textquotedblleft Gutzwiller
formula\textquotedblright) as has been recognized by Meinrenken
\cite{minibis,miniter}.

\section{Metaplectic operators as Weyl operators}

Recall that if $a\in\mathcal{S}^{\prime}(\mathbb{R}^{2n})$ the Weyl operator
with symbol $a$ is the operator $A:\mathcal{S}(\mathbb{R}^{n})\longrightarrow
\mathcal{S}^{\prime}(\mathbb{R}^{n})$ defined by%
\[
A=(2\pi)^{-n}\int_{\mathbb{R}^{2n}}a_{\sigma}(z)T(z)dz
\]
where $a_{\sigma}$ is the symplectic Fourier transform of $a$,%
\[
a_{\sigma}(z)=(2\pi)^{-n}\int_{\mathbb{R}^{2n}}e^{-i\sigma(z,z^{\prime}%
)}a(z^{\prime})dz^{\prime}%
\]
and $T(z)$ is the Heisenberg operator:%
\[
T(z)f(x^{\prime})=e^{-i(px^{\prime}-\frac{1}{2}px)}f(x^{\prime}-x).
\]
The distribution $a$ is the Weyl symbol of $A$.

In de Gosson \cite{Mp,Birk,JMP} metaplectic operators are studied from the
point of view of Weyl pseudo-differential calculus. The main results are
summarized in the following Theorem:

\begin{theorem}
\label{th1}Let $S\in\operatorname*{Mp}(2n,\mathbb{R})$ have projection
$\Pi(S)=s$ on $\operatorname*{Sp}(2n,\mathbb{R})$ such that $\det(s-I)\neq0$.
Then the symplectic Fourier transform of the Weyl symbol $a^{S}$ of $S$ is
given by the formula
\begin{equation}
a_{\sigma}^{S}(z)=\left(  \frac{1}{2\pi}\right)  ^{n}\frac{i^{\nu(S)}}%
{\sqrt{|\det(s-I)|}}e^{\frac{i}{2}Mz\cdot z} \label{1}%
\end{equation}
where
\begin{equation}
M=\tfrac{1}{2}J(s+I)(s-I)^{-1}\text{ \ , \ }J=%
\begin{pmatrix}
0 & I\\
-I & 0
\end{pmatrix}
\label{2}%
\end{equation}
The number $\nu(S)$, defined modulo $4$, is the Conley--Zehnder index of a
path joining the identity to $s$ in $\operatorname*{Sp}(2n,\mathbb{R})$ and
whose homotopy class depends on the choice of $S$ .
\end{theorem}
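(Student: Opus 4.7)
The plan is to prove the formula by direct computation of the Weyl symbol starting from an explicit integral-kernel representation of $S$, and then to track phases carefully enough to recognise the Conley--Zehnder index at the end. Since $\det(s-I)\neq 0$, I would first invoke the classical fact that $s$ admits (possibly after factoring as a product of two ``free'' symplectic matrices) a quadratic generating function $W(x,x')=\tfrac{1}{2}Px\cdot x-Lx\cdot x'+\tfrac{1}{2}Qx'\cdot x'$ with $P,Q$ symmetric and $L$ invertible. Every metaplectic lift of such a free $s$ then has a Schwartz kernel of the form $K_{S}(x,x')=c(S)\,e^{iW(x,x')}$, with $c(S)=(2\pi)^{-n/2}\,i^{m(S)}\sqrt{|\det L|}$, where $m(S)$ is an integer modulo $4$ recording the metaplectic choice.

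Next I would compute the Weyl symbol through the standard inversion formula $a^{S}(x,p)=\int K_{S}(x+y/2,x-y/2)\,e^{-ipy}\,dy$. This produces a Gaussian integral in $y$ which can be evaluated exactly, yielding a quadratic form $\tfrac{1}{2}\widetilde{M}z\cdot z$ in the phase-space variable $z=(x,p)$; a routine block-matrix manipulation relating $P,Q,L$ to the block decomposition of $s$ then identifies $\widetilde{M}$ with the symplectic Cayley transform $\tfrac{1}{2}J(s+I)(s-I)^{-1}$. Taking the symplectic Fourier transform $a_{\sigma}^{S}$ is a second Gaussian integration, this time in $z$, and produces both the prefactor $1/\sqrt{|\det(s-I)|}$ and another signature-type contribution to the phase.

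The main obstacle is identifying the accumulated $i^{k}$ phase with the Conley--Zehnder index $\nu(S)$. This phase is the sum of the two stationary-phase signatures coming from the two Gaussian evaluations plus the initial argument of $c(S)$; to show that it equals the Conley--Zehnder index of a symplectic path from $I$ to $s$ whose homotopy class is fixed by the choice of lift, I would use the expression of $\nu(S)$ in terms of the Leray--Maslov index developed in \cite{JMP,Birk,RMP,JMPA2,GGP}. Both sides are locally constant on the open subset $\{\det(s-I)\neq 0\}$ of $\operatorname{Sp}(2n,\mathbb{R})$, both depend on $s$ only through signatures of symmetric matrices constructed from its blocks, and both satisfy the same cocycle law under metaplectic composition. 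It therefore suffices to check the identity on a set of generators of $\operatorname{Mp}(2n,\mathbb{R})$ (the standard Fourier operator, Gaussian multipliers, and linear coordinate changes), where the oscillatory-integral phase and the Conley--Zehnder index are both directly computable.
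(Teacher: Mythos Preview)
The paper does not prove Theorem~\ref{th1} at all: it is stated there as a summary of results already established in \cite{Mp,Birk,JMP}, and the subsequent corollaries use it as a black box. So there is no in-paper proof to compare your proposal against. That said, your outline is essentially the strategy of those cited references: start from the explicit oscillatory kernel $K_{S_{W,m}}(x,x')=c\,e^{iW(x,x')}$ of a \emph{free} metaplectic operator, pass to the Weyl symbol by the inversion of~(\ref{kax}), and identify the resulting quadratic phase with the symplectic Cayley transform $M$; the phase bookkeeping is then matched with the Conley--Zehnder index through its expression in terms of the Leray--Maslov index.

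Two points deserve tightening. First, the hypothesis $\det(s-I)\neq 0$ is \emph{not} the condition that yields a quadratic generating function; the latter is the transversality condition $s\ell_{P}\cap\ell_{P}=\{0\}$, i.e.\ $\det B\neq 0$ in the block decomposition. These are independent, so your first sentence conflates two different genericity assumptions. In the cited proofs one first treats operators satisfying \emph{both} conditions and obtains~(\ref{1}) together with the explicit relation $\nu(S_{W,m})=m-\operatorname{Inert}W''_{xx}$ (formula~(\ref{Morse}) in the present paper); the extension to all $S$ with $\det(s-I)\neq 0$ is then made by continuity of both sides on this open set, not by factoring $S$ as a product and recomputing. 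Second, your proposed verification ``on generators'' is awkward here because several standard generators of $\operatorname{Mp}(2n,\mathbb{R})$---the chirps $V_{P}$ with projection $\begin{pmatrix}I&0\\-P&I\end{pmatrix}$, and the rescalings $M_{L}$ when $1\in\operatorname{Spec}L$---have $\det(s-I)=0$, so the formula~(\ref{1}) is not even stated for them. If you want a reduction-to-special-cases argument, it has to be carried out on free operators $S_{W,m}$ with $\det(s_{W}-I)\neq 0$, which is exactly what the references do.
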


It is easily verified that $(s+I)(s-I)^{-1}\in\mathfrak{sp}(2n,\mathbb{R})$
(the Lie algebra of $\operatorname*{Sp}(2n,\mathbb{R})$), hence $M=M^{T}$ (the
mapping $s\longmapsto(s+I)(s-I)^{-1}$ is sometimes called the symplectic
Cayley transform). The index $\nu(S)$ corresponds to a choice of the argument
of $\det(s-I)$:%

\begin{equation}
\arg\det(s-I)\equiv(\nu(S)-n)\pi\text{ \ }\operatorname{mod}2\pi\text{.}
\label{Maslov2}%
\end{equation}

For a detailed study of this relationship see de Gosson \cite{JMPA2}, where
the Conley--Zehnder index is expressed in terms of the Leray--Maslov index
\cite{JMPA1} on the symplectic space $(\mathbb{R}^{2n}\oplus\mathbb{R}%
^{2n},\sigma\oplus(-\sigma))$.

Recall that the symbol $a$ of a Weyl operator $A$ is related to the kernel
$K_{A}$ of $A$ by the formula%
\begin{equation}
K_{A}(x,y)=\left(  \frac{1}{2\pi}\right)  ^{n}\int_{\mathbb{R}^{n}}%
e^{ip\cdot(x-y)}a(\tfrac{1}{2}(x+y),p)dp \label{kax}%
\end{equation}
(interpreted in the sense of distributions).

\section{Pseudo-Trace Formulas}

An immediate consequence of Theorem \ref{th1} is the following formula:

\begin{corollary}
Let $S\in\operatorname*{Mp}(2n,\mathbb{R})$ be as above. We have%
\begin{equation}
\operatorname*{Tr}(S)=\left(  \frac{1}{2\pi}\right)  ^{n}\frac{i^{\nu(S)}%
}{\sqrt{|\det(s-I)|}}. \label{3}%
\end{equation}

\end{corollary}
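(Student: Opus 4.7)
The plan is very short: unwind the definition of the pseudo-trace, express it through the Weyl symbol, recognize the result as the symplectic Fourier transform of $a^{S}$ evaluated at the origin, and then apply Theorem \ref{th1}.

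First, I would start from
\[
\operatorname{Tr}(S)=\int_{\mathbb{R}^{n}}K_{S}(x,x)\,dx
\]
and substitute the kernel formula \eqref{kax} with $y=x$. Since the oscillating factor $e^{ip\cdot(x-y)}$ reduces to $1$ on the diagonal, this gives
\[
K_{S}(x,x)=\left(\tfrac{1}{2\pi}\right)^{n}\int_{\mathbb{R}^{n}}a^{S}(x,p)\,dp,
\]
and hence, after integrating in $x$,
\[
\operatorname{Tr}(S)=\left(\tfrac{1}{2\pi}\right)^{n}\int_{\mathbb{R}^{2n}}a^{S}(z)\,dz.
\]

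Second, I would recognize the right-hand side as $a^{S}_{\sigma}(0)$: by definition of the symplectic Fourier transform,
\[
a^{S}_{\sigma}(0)=(2\pi)^{-n}\int_{\mathbb{R}^{2n}}e^{-i\sigma(0,z')}a^{S}(z')\,dz'=(2\pi)^{-n}\int_{\mathbb{R}^{2n}}a^{S}(z')\,dz',
\]
so that $\operatorname{Tr}(S)=a^{S}_{\sigma}(0)$. Finally, evaluating formula \eqref{1} from Theorem \ref{th1} at $z=0$ makes the Gaussian factor $e^{\frac{i}{2}Mz\cdot z}$ disappear and yields exactly the claimed expression.

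The only real subtlety is that for $S\in\operatorname{Mp}(2n,\mathbb{R})$ the symbol $a^{S}$ is a tempered distribution (a chirp), not an $L^{1}$ function, so $\int a^{S}(z)\,dz$ and the interchange with $\int dp$ must be read distributionally — equivalently, as the pairing of $a^{S}$ with the constant function $1$, which is precisely $(2\pi)^{n}a^{S}_{\sigma}(0)$. The hypothesis $\det(s-I)\neq 0$ ensures $a^{S}_{\sigma}$ is a smooth Gaussian and in particular that its value at the origin is well defined; it also guarantees that the diagonal $K_{S}(x,x)$ makes distributional sense when tested against $1$, so no further analytic work beyond this standard interpretation is required.
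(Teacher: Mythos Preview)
Your argument is correct and follows exactly the paper's own proof: both pass from $\int K_{S}(x,x)\,dx$ via formula~\eqref{kax} to $(2\pi)^{-n}\int a^{S}(z)\,dz=a_{\sigma}^{S}(0)$, and then read off the value from~\eqref{1}. Your added remark on the distributional interpretation is a reasonable clarification but does not change the route.
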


\begin{proof}
In view of formula (\ref{kax}) we can write
\begin{align*}
\operatorname*{Tr}(S)  &  =\int_{\mathbb{R}^{n}}K_{S}(x,x)dx\\
&  =\left(  \frac{1}{2\pi}\right)  ^{n}\int_{\mathbb{R}^{2n}}a(z)dz\\
&  =a_{\sigma}^{S}(0)
\end{align*}
hence (\ref{3}) in view of (\ref{1}).
\end{proof}

Assume now that $s\ell_{P}\cap\ell_{P}=\{0\}$ where $\ell_{P}=\{0\}\times
\mathbb{R}^{n}$; in the canonical symplectic basis of $(\mathbb{R}^{2n}%
,\sigma)$ we may identify $s$ with a block matrix $%
\begin{pmatrix}
A & B\\
C & D
\end{pmatrix}
$ with $\det B\neq0$, and $S\in\operatorname*{Mp}(2n,\mathbb{R})$ has
projection $\Pi(S)=s$ if and only if
\[
Sf(x)=\left(  \frac{1}{2\pi i}\right)  ^{n}i^{m}\sqrt{|\det B^{-1}|}%
\int_{\mathbb{R}^{n}}e^{iW(x,x^{\prime})}f(x^{\prime})dx^{\prime}%
\]
for $f\in\mathcal{S}(\mathbb{R}^{n})$ (Leray \cite{Leray}, de Gosson
\cite{Birk}); here
\[
W(x,x^{\prime})=\tfrac{1}{2}DB^{-1}x\cdot x-B^{-1}x\cdot x^{\prime}+\tfrac
{1}{2}B^{-1}Ax^{\prime}\cdot x^{\prime}%
\]
is the generating function of $s$ and $m$ is the Maslov index:
\[
\arg\det B^{-1}=m\pi\text{ \ }\operatorname{mod}2\pi.
\]
We will write from now on $s=s_{W}$ and $S=S_{W,m}$. It is proven in de Gosson
... that if $\det(s_{W}-I)\neq0$ then%
\begin{equation}
\nu(S_{W,m})=m-\operatorname*{Inert}W_{xx}^{\prime\prime} \label{Morse}%
\end{equation}
where $\operatorname*{Inert}W_{xx}^{\prime\prime}$ (the \textquotedblleft
Morse index\textquotedblright) is the signature of the Hessian matrix of the
mapping $x\longmapsto W(x,x)$. Thus:

\begin{corollary}
When $s=s_{W}$ and $\det(s_{W}-I)\neq0$ then%
\[
\operatorname*{Tr}(S_{W,m})=\frac{i^{m-\operatorname*{Inert}W_{xx}%
^{\prime\prime}}}{\sqrt{|\det(s_{W}-I)|}}.
\]

\end{corollary}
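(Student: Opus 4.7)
The plan is to combine the previous Corollary with the identity (\ref{Morse}). Since $\det(s_W - I)\neq 0$ by hypothesis and $\Pi(S_{W,m}) = s_W$, applying the previous Corollary to $S = S_{W,m}$ gives
\[
\operatorname*{Tr}(S_{W,m}) = \left(\frac{1}{2\pi}\right)^n \frac{i^{\nu(S_{W,m})}}{\sqrt{|\det(s_W - I)|}},
\]
and substituting $\nu(S_{W,m}) = m - \operatorname*{Inert}W_{xx}^{\prime\prime}$ from (\ref{Morse}) produces the stated formula. In this route the entire substance of the corollary is packaged inside the Morse-type identity (\ref{Morse}).

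For a self-contained derivation that does not invoke (\ref{Morse}), one would read the kernel $K_{S_{W,m}}(x,x') = (2\pi i)^{-n}i^m\sqrt{|\det B^{-1}|}\,e^{iW(x,x')}$ directly off the generating-function representation of $S_{W,m}$, specialize to $x=x'$, and evaluate the Fresnel integral
\[
\int_{\mathbb{R}^n} e^{\tfrac{i}{2} W_{xx}^{\prime\prime} x\cdot x}\,dx = (2\pi)^{n/2}\, e^{i\pi\operatorname{sgn}(W_{xx}^{\prime\prime})/4}\,|\det W_{xx}^{\prime\prime}|^{-1/2},
\]
using the fact that $W(x,x) = \tfrac{1}{2}W_{xx}^{\prime\prime}x\cdot x$ is purely quadratic. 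Two auxiliary pieces then close the calculation: the block-matrix identity $\det(s_W - I) = (-1)^n \det(B)\det(W_{xx}^{\prime\prime})$, obtained by block row reduction of $s_W - I$ and invoking the symplectic relations $A^T D - C^T B = I$ and $B^T D = D^T B$, which converts $\sqrt{|\det B^{-1}|}/\sqrt{|\det W_{xx}^{\prime\prime}|}$ into $1/\sqrt{|\det(s_W - I)|}$; and the signature-to-inertia conversion $\operatorname{sgn}(Q) = n - 2\operatorname*{Inert}(Q)$, which rewrites the Fresnel phase as $e^{i\pi n/4}\,i^{-\operatorname*{Inert}W_{xx}^{\prime\prime}}$.

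The genuine obstacle lies entirely in the direct route: the accumulated phase factors — the $i^m$ from the half-density normalization of $S_{W,m}$, the $i^{-n}$ hidden inside $(2\pi i)^{-n}$, and the Fresnel phase $e^{i\pi\operatorname{sgn}(W_{xx}^{\prime\prime})/4}$ — must conspire to collapse into a single clean power of $i$ matching $m - \operatorname*{Inert}W_{xx}^{\prime\prime}$. Organizing this bookkeeping is precisely what (\ref{Morse}) accomplishes abstractly via a homotopy argument along a symplectic path from $I$ to $s_W$; taking (\ref{Morse}) as input, the corollary reduces to a one-line substitution as in the first paragraph.
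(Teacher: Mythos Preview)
Your first paragraph is precisely the paper's argument: the paper gives no separate proof and simply writes ``Thus:'' before the corollary, indicating that one substitutes (\ref{Morse}) into (\ref{3}). One caveat you pass over without comment: that substitution literally yields $(2\pi)^{-n}$ times the displayed right-hand side, so the corollary as printed is missing the prefactor $(2\pi)^{-n}$ inherited from (\ref{3}) --- a typo in the paper rather than a flaw in your reasoning, but worth flagging since your direct Fresnel computation in the later paragraphs would also detect a power-of-$2\pi$ mismatch.
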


Note that we have, explicitly,%
\[
\det(s_{W}-I)=(-1)^{n}\det B\det(B^{-1}A+DB^{-1}-B^{-1}-(B^{T})^{-1})
\]
(see de Gosson \cite{Mp}, Lemma 4).

It turns out that we have the following factorization result (de Gosson
\cite{Mp}):

\begin{proposition}
Every $S\in\operatorname*{Mp}(2n,\mathbb{R})$ can be written (in infinitely
many ways) as a product $S=S_{W,m}S_{W^{\prime},m^{\prime}}$ such that
$\det(s_{W}-I)\neq0$ and $\det(s_{W^{\prime}}-I)\neq0$.
\end{proposition}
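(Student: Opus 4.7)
The strategy is to parametrize factorizations of $S \in \operatorname{Mp}(2n,\mathbb{R})$ by a free parameter $T \in \operatorname{Mp}(2n,\mathbb{R})$, setting $S_1 = ST^{-1}$ and $S_2 = T$, and to show that the $T$ yielding a decomposition of the required type form an open dense subset of $\operatorname{Mp}(2n,\mathbb{R})$.

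Set $s = \Pi(S)$ and $t = \Pi(T)$. From the characterization of quadratic Fourier transforms given above, the product $S = (ST^{-1})\,T$ is of the required form $S_{W,m}S_{W',m'}$ with $\det(s_W - I)\neq 0$ and $\det(s_{W'} - I)\neq 0$ precisely when the following four conditions all hold on $t$: (i) the upper-right $n\times n$ block of $t$ is invertible, so that $T = S_{W',m'}$; (ii) the upper-right block of $st^{-1}$ is invertible, so that $ST^{-1} = S_{W,m}$; (iii) $\det(t - I)\neq 0$; and (iv) $\det(st^{-1} - I)\neq 0$. Each of these is the non-vanishing of a polynomial function on the connected Lie group $\operatorname{Sp}(2n,\mathbb{R})$.

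Each of the four polynomials is non-identically zero: for (i) and (iii), $t = J$ has $B$-block equal to $I$ and satisfies $\det(J - I) = 2^n\neq 0$; for (ii) and (iv), the substitution $t \mapsto st^{-1}$ is a self-diffeomorphism of $\operatorname{Sp}(2n,\mathbb{R})$, so their non-trivial solvability reduces to that of (i) and (iii). Since $\operatorname{Sp}(2n,\mathbb{R})$ is connected and real-analytic, the zero set of each non-identically-zero polynomial is closed with empty interior, so each condition defines an open dense subset; the finite intersection is open and dense in the Baire space $\operatorname{Sp}(2n,\mathbb{R})$, in particular nonempty and infinite. Any admissible $t$ lifts in two ways to $T \in \Pi^{-1}(t)$, producing the desired factorization; the infinitude of admissible $t$ yields infinitely many decompositions. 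The only non-routine step is the verification that each of the four polynomial conditions is individually satisfiable --- the density argument being then automatic from the connectedness of $\operatorname{Sp}(2n,\mathbb{R})$ and the analyticity of the defining functions.
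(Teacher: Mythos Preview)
Your argument is correct. Note, however, that the paper does not actually supply a proof of this proposition: it is quoted as a known factorization result from \cite{Mp}, so there is no in-paper argument to compare against.

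As a self-contained proof your approach is sound. The key points all check out: on $\operatorname{Sp}(2n,\mathbb{R})$ one has $t^{-1}=-Jt^{T}J$, so the four conditions (invertibility of the $B$-block of $t$ and of $st^{-1}$, and non-vanishing of $\det(t-I)$ and $\det(st^{-1}-I)$) are each given by the non-vanishing of a polynomial in the matrix entries of $t$; the witnesses $t=J$ and $t=J^{-1}s$ show none of these polynomials vanishes identically; and connectedness of $\operatorname{Sp}(2n,\mathbb{R})$ together with real-analyticity then makes each admissible set open and dense, so their intersection is open, dense, and in particular infinite. The lift of an admissible $t$ to $T\in\operatorname{Mp}(2n,\mathbb{R})$ is then automatically of the form $S_{W',m'}$, and likewise for $ST^{-1}$. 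A pleasant feature of your route is that the ``infinitely many ways'' clause falls out for free from density, rather than requiring an explicit perturbation of a single constructed factorization.
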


Using formula (\ref{Morse}) together with the product formula
\[
\nu(SS^{\prime})=\nu(S)+\nu(S^{\prime})+\tfrac{1}{2}\operatorname*{sign}%
(M+M^{\prime})
\]
proven in \cite{GGP,JMPA2} the result above allows the calculation of the
Conley--Zehnder index in the general case. The constructions in \cite{GGP} are
certainly useful in this context.

\begin{acknowledgement}
The first author has been financed by the Austrian Science Foundation FWF
(Projektnummer P20442-N13). The second author has been financed by the Marie
Curie Outgoing Fellowship PIOF 220464.
\end{acknowledgement}


\begin{thebibliography}{99}                                                                                               %
\bibitem {CZ}Conley, C. E., Zehnder, E.: Morse-type index theory for flows and
periodic solutions of Hamiltonian equations. Comm. Pure and Appl. Math.\ 37,
207--253 (1978)

\bibitem {JMPA1}de Gosson, M.: The structure of $q$-symplectic geometry. J.
Math. Pures et Appl. 71, 429--453 (1992)

\bibitem {Mp}de Gosson, M.: The Weyl Representation of Metaplectic operators.
Letters in Mathematical Physics 72 129--142 (2005)

\bibitem {Birk}de Gosson, M.: Symplectic Geometry and Quantum Mechanics.
Birkh\"{a}user, Basel, series \textquotedblleft Operator Theory: Advances and
Applications\textquotedblright\ (subseries: \textquotedblleft Advances in
Partial Differential Equations\textquotedblright), Vol. 166 (2006)

\bibitem {JMP}de Gosson, M., de Gosson, S.: An extension of the
Conley--Zehnder Index, a product formula and an application to the Weyl
representation of metaplectic operators. J. Math. Phys.,\ 47(12) (2006)

\bibitem {GGP}de Gosson, M., de Gosson, S., Piccione, P.: On a product formula
for the Conley--Zehnder Index of symplectic paths and its applications. To
appear in Ann. Global Analysis and Geom. 34, 167--183 (2008). Preprint 2006
(arXiv math.SG/0607024)

\bibitem {RMP}de Gosson, M.: Metaplectic Representation, Conley--Zehnder
Index, and Weyl Calculus on Phase Space. Rev. Math. Physics, 19(8), 1149--1188 (2007)

\bibitem {JMPA2}de Gosson, M.: On the usefulness of an index due to Leray for
studying the intersections of Lagrangian and symplectic paths. Journal de
Math\'{e}matiques Pures et Appliqu\'{e}s 91 (2009) 598--613 [Preprint
MPIM2007-119, Max Planck Institute for Mathematics preprint server:
http://www.mpim-bonn.mpg.de/ preprints/retrieve (2008)]

\bibitem {gurhad}Gurevich, S. and Hadani, R.: The geometric Weil
representation. Selecta Math.13(3) (2007) 465--481

\bibitem {HWZ}Hofer, H., Wysocki, K., Zehnder, E.: Properties of
pseudoholomorphic curves in symplectizations II: Embedding controls and
algebraic invariants. Geometric and Functional Analysis 2(5), 270--328 (1995)

\bibitem {Leray}Leray, J.: Lagrangian Analysis and Quantum Mechanics,\ a
mathematical structure related to asymptotic expansions and the Maslov index.
The MIT Press, Cambridge, Mass. (1981)

\bibitem {luma}Luef, F., Manin, Yu.: Quantum Theta Functions and Gabor Frames
for Modulation Spaces. Letters in Mathematical Physics 88(1--3) 132--161 (2005)

\bibitem {Long}Long, Y.: Index iteration theory for symplectic paths and
multiple periodic solution orbits. Frontiers of Math. 8, 341--353 (2006)

\bibitem {minibis}Meinrenken, E.: Trace formulas and the Conley--Zehnder
index. J. Geom. Phys. 13, 1--15 (1994)

\bibitem {miniter}Meinrenken, E.: Semiclassical principal symbols and
Gutzwiller's trace formula. Reports in Mathematical Physics 31, 279--295 (1992)

\bibitem {tho}Thomas, T. The character of the Weil representation. J. London
Math. Soc. (2) 77 (2008) 221--239.
\end{thebibliography}
\end{document}